\theoremstyle{plain}
    \newtheorem{thm}{Theorem}[section]
    \newtheorem{cor}[thm]{Corollary}
\theoremstyle{definition}
\theoremstyle{remark}
\numberwithin{equation}{section}
\newcommand{\rar}{\ensuremath{\rightarrow}}
\newcommand{\lrar}{\ensuremath{\longrightarrow}}
\newcommand{\Hom}{\textup{Hom}}
\newcommand{\stmod}{\textup{stmod}}
\newcommand{\uHom}{\underline{\Hom}}
\newcommand{\lstk}[1]{\stackrel{#1}{\longrightarrow}}
\newcommand{\thick}{\textup{thick}}
\def\HHH{\operatorname{H}\nolimits}
\def\HHHH{\operatorname{\hat{H}}\nolimits}
\def\Hom{\operatorname{Hom}\nolimits}
\def\hExt{\operatorname{\widehat{Ext}}\nolimits}
\def\CE{{\mathcal{E}}}
\def\Rad{\operatorname{Rad}\nolimits}
\begin{document}

\title
{Freyd's generating hypothesis with almost split sequences}
\date{\today}

\author{Jon F. Carlson}
\address{
Department of Mathematics\\
University of Georgia\\
Athens, GA 30602.}
\email{jfc@math.uga.edu}

\author{Sunil K. Chebolu}
\address{Department of Mathematics  \\
Illinois State University \\
Normal, IL 61790}
\email{schebol@ilstu.edu}

\author{J\'{a}n Min\'{a}\v{c}}
\address{Department of Mathematics\\
University of Western Ontario\\
London, ON N6A 5B7, Canada}
\email{minac@uwo.ca}

\thanks{The first author is partially supported by a grant from the NSF and the
third  author is supported from the NSERC}

\keywords{Tate cohomology, generating hypothesis,
stable module category, ghost map, almost split sequence.}
\subjclass[2000]{Primary 20C20, 20J06; Secondary 55P42}

\begin{abstract}
Freyd's generating hypothesis for the stable module category of a
non-trivial finite group $G$ is the statement that
a map between finitely generated 
$kG$-modules that belongs to the thick subcategory generated by $k$
factors through a projective if the induced map on Tate
cohomology is trivial. In this paper we show that Freyd's 
generating hypothesis fails
for $kG$ when the Sylow $p$-subgroup of
$G$ has order at least $4$ using almost split sequences. 
By combining this with our earlier work, we obtain 
a complete answer to Freyd's generating hypothesis for 
the stable module category 
of a finite group.  We also derive some consequences of 
the generating hypothesis.
\end{abstract}

\maketitle
\thispagestyle{empty}


\section{Introduction}
The second and third authors have studied the generating
hypothesis (GH) and related questions for the stable module category 
of a finite group in a series of papers  \cite{CCM2, CCM3, CCM4, CCM} with Benson and Christensen, 
and have obtained many partial results. In this paper we give  a complete solution to
the generating hypothesis for the stable module category of a finite group.
We begin by recalling the statement of the GH. Let $G$
be a finite group and let $k$ be a field of characteristic $p$.
We will work in the stable module category $\stmod(kG)$
of $kG$. Recall that this is the tensor triangulated 
category obtained from the category of 
finitely generated left $kG$-modules by factoring out the projective modules. 
In the stable module category, $\uHom_{kG}(A, B)$ 
stands for the space of maps between modules $A$ and $B$, 
and $\Omega$ will denote the translation functor. 
Loosely speaking, the generating hypothesis claims 
that if a module $L$ generates a subcategory then the 
functor $\uHom_{kG}(\Omega^* L, -)$
detects trivial maps (maps that factor through a 
projective) in the subcategory. We make this precise 
in the case that is of interest to us, namely the 
subcategory generated by the trivial representation $k$.
Let $\thick_G(k)$ denote the thick subcategory generated by $k$. That is, 
the smallest full subcategory of $\stmod(kG)$  
that contains the trivial module $k$ and
is closed under exact triangles and direct summands. 
The Generating Hypothesis (GH) for a group ring $kG$ is the
statement that the Tate cohomology functor 
$\HHHH^*(G, -) \cong  \uHom_{kG}(\Omega^* k, - )$
\begin{eqnarray*}
 \thick_G(k) & \lrar & \HHHH^*(G,k)\text{-modules}  \\
   M & \mapsto & \HHHH^*(G, M) 
\end{eqnarray*}
detects trivial maps in $\thick_G(k)$, i.e., the Tate 
cohomology functor $\HHHH^*(G, -)$
on $\thick_G(k)$ is faithful.  Note that when $G$ 
is a $p$-group, then $\thick(k)$
is the entire stable module category $\stmod(kG)$.

It is shown in \cite{CCM3} that the GH holds for $kP$ when $P$
is a $p$-group if and only if $P$ is $C_2$ or $C_3$.
It is natural then to  conjecture that for an arbitrary finite
group $G$  the GH holds for $kG$ if and only if the Sylow $p$-subgroup of
$G$ is either $C_2$ or $C_3$.    In \cite{CCM4} this conjecture
has been proved using block theory for  groups which  have periodic cohomology.
In this paper, we  show that the GH fails for groups with
non-periodic cohomology. In fact, we show that the GH 
fails whenever the Sylow $p$-subgroup of $G$ has order at least 4.
Thus we have a  complete  solution to the Freyd's 
generating hypothesis in the stable module category:

\begin{thm} \label{maintheorem}
Let $G$ be a finite group and let $k$ be a field of
characteristic $p$ that divides the
order of $G$. Then the GH holds for $kG$  if and only
if a Sylow $p$-subgroup of $G$ is isomorphic to either $C_2$ or $C_3$.
\end{thm}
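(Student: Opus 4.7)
The plan is to prove the two implications of the biconditional separately. The ``if'' direction will follow immediately from previously established work, while the ``only if'' direction is the main contribution and is proved by constructing an explicit nontrivial ghost via almost split sequences.

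For the ``if'' direction, suppose the Sylow $p$-subgroup $P$ of $G$ is either $C_2$ or $C_3$. Then $P$ is cyclic, so the mod-$p$ cohomology of $G$ is periodic, and the main theorem of \cite{CCM4} applies to conclude that the GH holds for $kG$.

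For the ``only if'' direction, assume that a Sylow $p$-subgroup $P$ of $G$ has order at least $4$. We aim to exhibit a morphism $\varphi$ in $\thick_G(k)$ which is a nontrivial ghost, i.e., induces the zero map on $\HHHH^*(G,-) \cong \uHom_{kG}(\Omega^* k, -)$ but does not factor through a projective. The essential tool is the almost split triangle in $\stmod(kG)$ ending at $k$,
\[
\Omega^2 k \rar E \rar k \stk{\delta} \Omega k,
\]
whose defining property is that for any morphism $f \colon X \rar k$, the composition $\delta \circ f$ vanishes in $\stmod(kG)$ if and only if $f$ is not a stable split epimorphism, equivalently $k$ is not a direct summand of $X$ in $\stmod(kG)$ via $f$. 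The idea is to choose an auxiliary module $L \in \thick_G(k)$ together with a morphism $g \colon L \rar k$ such that no composite $\Omega^n k \rar L \stk{g} k$ is a stable split epimorphism; the map $\varphi := \delta \circ g$ is then forced to be a ghost by the AR factorization property.

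The argument breaks into three steps: first, select $L$ and $g$, with $L$ built from iterates of $E$ or Heller translates of $k$ so that the required non-splitting property of $gf$ holds for every map $f \colon \Omega^n k \rar L$; second, invoke the AR property to conclude the ghost property of $\varphi$; and third, show that $\varphi$ remains nonzero in $\stmod(kG)$. The main obstacle is the third step. We expect to reduce along the restriction functor $\uRes^G_P$, which is faithful on $\thick_G(k)$ by Green correspondence, and then analyze the structure of $E$ over $kP$ directly. The hypothesis $|P| \ge 4$ is crucial here: for $|P| \le 3$ the Loewy/dimension count collapses and $\varphi$ becomes stably trivial, in agreement with \cite{CCM3}.
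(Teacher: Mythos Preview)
Your ``if'' direction is correct and matches the paper.

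The ``only if'' direction, however, contains a fatal gap: the two requirements you place on $\varphi = \delta \circ g$ are mutually exclusive. By the defining property of the almost split triangle ending at $k$, a morphism $h \colon X \to k$ satisfies $\delta h = 0$ in $\stmod(kG)$ if and only if $h$ is \emph{not} a split epimorphism. Thus $\varphi \neq 0$ forces $g \colon L \to k$ itself to be a split epimorphism, so there is a section $s \colon k \to L$ with $gs = 1_k$. But then $s \in \uHom_{kG}(\Omega^0 k, L)$ and $\varphi_*(s) = \delta g s = \delta$, which is a nonzero element of $\uHom_{kG}(k, \Omega k)$. Hence whenever $\varphi$ is nonzero it already fails to be a ghost. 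No clever choice of $L$, and no restriction to $P$, can circumvent this; the obstruction is purely formal and independent of the hypothesis $|P| \geq 4$.

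The paper sidesteps this by placing the almost split sequence at a different module. One first produces an indecomposable non-projective $N \in \thick_G(k)$ with $N \not\cong \Omega^i k$ for every $i$, and then takes $\phi \colon N \to \Omega N$ to be the connecting map of the AR triangle ending at $N$. Any $f \colon \Omega^i k \to N$ automatically fails to be a split epimorphism (since $\Omega^i k \not\cong N$), so $\phi f = 0$ and $\phi$ is a ghost; and $\phi \neq 0$ because the AR sequence is non-split. The substantive work, and the only place where $|P| \geq 4$ enters, is the construction of such an $N$: one takes an indecomposable summand of $L_\zeta$, the kernel of a cocycle $\zeta \colon \Omega^n k \to k$ for a suitably chosen $\zeta \in \HHHH^*(G,k)$, and verifies $N \not\cong \Omega^i k$ by a dimension count over $P$ in the periodic case and by a support-variety argument in the non-periodic case.
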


Our main tool in showing the failure of the GH is an almost 
split sequence. In contrast with our earlier counting techniques
\cite{CCM4} which only show the existence of a counter-example 
to the GH, our current treatment with almost split sequences
has the advantage that it produces an explicit and simple 
counter-example whenever the GH fails. 

For the interested reader we mention that there has been great 
interest in generating hypothesis in other triangulated categories
including the stable homotopy category of spectra ~\cite{freydGH} 
where it originated, but also in the derived categories of rings 
~\cite{keir, GH-D(R)}.


Throughout the paper $G$ will denote a finite 
group, and $k$ will be
a field of characteristic $p$ which divides the order 
of $G$. We use the standard 
facts about the stable module category of $kG$ which 
can be found in ~\cite{carlson-modulesandgroupalgebras}.

The first author thanks the Alexander von Humboldt Foundation 
for support and the RWTH in Aachen for their hospitality while
part of this paper was written.

\section{The generating hypothesis}

A \emph{ghost} in  $\thick(k)$ is a map between $kG$-modules 
in $\thick(k)$ that induces the trivial map in Tate cohomology.
Our goal is to show that there are non-trivial ghosts in $\thick(k)$ whenever
the Sylow $p$-subgroup of $G$ has order at least 4. Our 
main tool in showing the
existence of these non-trivial ghosts is an almost split 
sequence (a.k.a. Auslander-Reiten sequence) 
which we now define. 

A short exact sequence 
$$
\epsilon \colon \ \ \ 0 \lrar A \lrar B \lrar C \lrar 0
$$
of finitely generated $kG$-modules is an almost split 
sequence if $A$ and $C$ are indecomposable $kG$-modules, 
and $\epsilon$ is  a non-split sequence with the property 
that every map $M \rar C$  which is not a split epimorphism 
factors through the middle term $B$ \cite{ARS}.  It is a theorem of 
Auslander and Reiten that given any finitely 
generated indecomposable non-projective $kG$-module $C$, 
there exists a unique (up to isomorphism of short 
exact sequences) almost split sequence  terminating in $C$.  
Moreover, the first term $A$ of the 
almost split sequence ending in $C$
is shown to be isomorphic to  $\Omega^2 C$.

The theorem that we are now interested in is the following.

\begin{thm}\label{thm:newghosts}
Let $M$ and $N$ be two non-projective indecomposable
modules in $\stmod(kG)$ such that $N \not\cong \Omega^i(M)$ for any $i$.
Then there exists a non-trivial map
$\phi \colon N \longrightarrow \Omega N$ in $\stmod(kG)$
such that the induced map
$$
\xymatrix{
\phi_*: \hExt_{kG}^*(M,  N) \ar[r] & \hExt_{kG}^*(M, \Omega N)
}
$$
is the zero map.
\end{thm}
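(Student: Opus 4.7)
The natural source of the map $\phi$ is the almost split sequence terminating at $N$. The first step is to invoke the Auslander--Reiten theorem, applied to the indecomposable non-projective module $N$, to obtain the almost split sequence
\[
\epsilon \colon \ \ 0 \lrar \Omega^2 N \lrar E \stk{\pi} N \lrar 0.
\]
Viewing $\epsilon$ as an exact triangle $\Omega^2 N \to E \stk{\pi} N \stk{\phi} \Omega N$ in $\stmod(kG)$ defines a connecting morphism $\phi : N \to \Omega N$; this will be my candidate. Its non-triviality is automatic: $\epsilon$ is non-split by definition of an almost split sequence, so the class it represents in $\Ext^1_{kG}(N, \Omega^2 N) \cong \uHom_{kG}(N, \Omega N)$ is non-zero, and under this standard shift isomorphism the class corresponds precisely to $\phi$.

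To establish the vanishing of $\phi_*$, I would take an arbitrary class $\alpha \in \hExt^n_{kG}(M, N) \cong \uHom_{kG}(\Omega^n M, N)$ and represent it by a module homomorphism $g : \Omega^n M \to N$, choosing the representative of $\Omega^n M$ with no projective summands. Since $M$ is indecomposable non-projective and the syzygy functor is an auto-equivalence of $\stmod(kG)$, this reduced $\Omega^n M$ is indecomposable in $\modcat(kG)$; the hypothesis $N \not\cong \Omega^i M$ (taken with $i = -n$) then forces $\Omega^n M \not\cong N$. In particular $g$ cannot be a split epimorphism in $\modcat(kG)$, because a splitting would give $\Omega^n M \cong N \oplus \Ker(g)$ and, combined with indecomposability of $\Omega^n M$, force $\Omega^n M \cong N$. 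The defining property of the almost split sequence therefore provides a factorization $g = \pi \circ h$ with $h : \Omega^n M \to E$. Since consecutive maps in an exact triangle compose to zero, $\phi \circ \pi = 0$ in $\stmod(kG)$, whence $\phi \circ g = \phi \circ \pi \circ h = 0$, i.e., $\phi_*(\alpha) = 0$.

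The main technical point I would be careful about is the interplay between $\modcat(kG)$ and $\stmod(kG)$: the almost-split factorization property is stated in $\modcat(kG)$ in terms of split epimorphisms, whereas the hypothesis $N \not\cong \Omega^i M$ is naturally a statement in $\stmod(kG)$. The bridge is to work with a representative of $\Omega^n M$ that has no projective summands, for which the Krull--Schmidt theorem for $kG$-modules makes $\modcat(kG)$-isomorphism and $\stmod(kG)$-isomorphism coincide. A secondary point to verify is that the connecting morphism in the triangle associated to $\epsilon$ really does correspond, under $\Ext^1_{kG}(N,\Omega^2 N) \cong \uHom_{kG}(N, \Omega N)$, to the extension class of $\epsilon$; this is a standard feature of how short exact sequences induce triangles in $\stmod(kG)$.
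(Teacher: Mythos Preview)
Your proposal is correct and follows essentially the same approach as the paper: take $\phi$ to be the connecting map of the triangle coming from the almost split sequence ending in $N$, use non-splitness for non-triviality of $\phi$, and use the lifting property of almost split sequences (together with $\Omega^n M \not\cong N$) to factor any $g:\Omega^n M\to N$ through the middle term, whence $\phi\circ g=0$. If anything, you are more explicit than the paper about the passage between $\modcat(kG)$ and $\stmod(kG)$ via projective-free representatives and Krull--Schmidt; the only slip is that the relevant instance of the hypothesis is $i=n$ rather than $i=-n$, which is harmless since the hypothesis ranges over all $i$.
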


\begin{proof}
Consider the almost split sequence 
\[ 0 \lrar \Omega^2 N \lrar B \lrar N \lrar 0 \]
ending in $N$. This short exact sequence represents a distinguished triangle
\[  \Omega^2 N \lrar B \lrar N \lstk{\phi}  \Omega N \]
in the stable category. We will show that 
the map $\phi \colon N \lrar \Omega N$
has the desired properties.
Almost split sequences are, by definition, non-split short
exact sequences, and therefore the
boundary map $\phi$ in the above triangle
must be a non-trivial map in the stable category.

The next thing to be shown is that the map 
$\phi \colon N \lrar \Omega N$ induces the
zero map on the functors
$\uHom_{kG}(\Omega^i M , -)  \cong \hExt^i(M, -)$
for all $i$. To this end, consider any map
$f  \colon \Omega^i M  \lrar N$. We have to show that the
composite
\[ \Omega^i M  \lstk{f} N \lstk{\phi} \Omega N \]
is trivial in the stable category. Consider the following diagram
\[
\xymatrix{
 & & \Omega^i M \ar[d]^f \ar@{.>}[dl]  & \\
\Omega^2N \ar[r] & B \ar[r] & N \ar[r]^\phi & \Omega N
}
\]
where the bottom row is our distinguished triangle.
The map $f \colon  \Omega^iM \lrar  N$ cannot be a split epimorphism
by the given hypothesis, therefore by the
defining property of an almost split sequence,
the map $f$ factors through the middle term
$B$ as shown in the above diagram. Since the composite
of any two successive maps in a distinguished
triangle is zero, the composite $\phi \circ f$
is also zero by commutativity. So we are done.
\end{proof}

\begin{cor}\label{strategy}
If there is an indecomposable non-projective module $N$ in $thick_G(k)$
which is not isomorphic to $\Omega^i k$ for any $i$, then the GH fails
for $kG$.
\end{cor}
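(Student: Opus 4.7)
The plan is to apply Theorem \ref{thm:newghosts} directly with $M = k$ and $N$ the given indecomposable non-projective module in $\thick_G(k)$. First I would check that the hypothesis of the theorem is satisfied: $k$ and $N$ are both non-projective indecomposable modules in $\stmod(kG)$ (note $k$ is non-projective because $p$ divides $|G|$), and by assumption $N \not\cong \Omega^i k = \Omega^i M$ for any $i$.

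Theorem \ref{thm:newghosts} then produces a non-trivial map
\[
\phi\colon N \lrar \Omega N
\]
in $\stmod(kG)$ such that the induced map
\[
\phi_*\colon \hExt^{*}_{kG}(k, N) \lrar \hExt^{*}_{kG}(k, \Omega N)
\]
is zero. Identifying $\hExt^{*}_{kG}(k, -)$ with Tate cohomology $\HHHH^{*}(G, -)$, this says precisely that $\phi$ induces the zero map on Tate cohomology, i.e.\ $\phi$ is a ghost.

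Next I would confirm that $\phi$ lives in $\thick_G(k)$: by hypothesis $N \in \thick_G(k)$, and since a thick subcategory is closed under the translation $\Omega$, also $\Omega N \in \thick_G(k)$. Thus $\phi$ is a non-trivial ghost between objects of $\thick_G(k)$, which witnesses the failure of the generating hypothesis for $kG$.

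There is essentially no obstacle here; the work has been done in Theorem \ref{thm:newghosts}, and the corollary is just the observation that specializing to $M = k$ converts the vanishing of $\hExt^{*}_{kG}(k, -)$ into the vanishing of Tate cohomology, which is exactly the test the GH requires maps to pass.
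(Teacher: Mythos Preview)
Your proposal is correct and follows essentially the same approach as the paper: specialize Theorem~\ref{thm:newghosts} to $M=k$, identify $\hExt^{*}_{kG}(k,-)$ with Tate cohomology, and observe that the resulting non-trivial map $\phi\colon N\to\Omega N$ is a ghost in $\thick_G(k)$. Your version is slightly more explicit about checking the hypotheses (non-projectivity of $k$, closure of $\thick_G(k)$ under $\Omega$), but there is no substantive difference.
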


\begin{proof}
We apply the previous theorem with $M = k$. Since $\hExt_{kG}^*(k,N)
\cong \HHHH^*(G, N)$, the existence of an indecomposable 
nonprojective module $N$ in
$\thick_G(k)$ with the property that $N \not\cong \Omega^n k$ for
any $n$ implies (by the above theorem) the existence of a non-trivial map
$\phi: N \longrightarrow \Omega N$ in $\thick_G(k)$
such that the induced map of $\HHHH^*(G,k)$-modules,
$\HHHH^*(G,N) \longrightarrow \HHHH^*(G,N)$, is the zero map. In other
words $\phi \colon N \lrar \Omega N$ is a non-trivial ghost in $\thick(k)$.
Therefore the GH fails for $kG$.
\end{proof}

It is not hard to produce modules that satisfy the conditions laid out
in the above corollary. Specifically we have the following.

\begin{thm} \label{GHforgK}
Suppose that the  Sylow $p$-subgroup $P$ of $G$ has
order is at least 4. Then the GH fails for $kG$.
\end{thm}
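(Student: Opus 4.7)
The plan is to invoke Corollary~\ref{strategy}: it suffices to produce an indecomposable non-projective module $N \in \thick_G(k)$ that is not isomorphic to any Heller shift $\Omega^i k$. My unifying tool is the following numerical criterion. Restricting to the Sylow $p$-subgroup $P$ gives $\Res^G_P \Omega^i k_G \cong \Omega^i k_P$ modulo projectives, and the standard relation $\dim \Omega M \equiv -\dim M \pmod{|P|}$ for $kP$-modules $M$ yields $\dim_k \Omega^i k_P \equiv (-1)^i \pmod{|P|}$. Hence $\dim_k \Omega^i k_G \equiv (-1)^i \pmod{|P|}$ for every $i$, so that any $N$ with $\dim_k N \not\equiv \pm 1 \pmod{|P|}$ is automatically not a Heller shift of $k$.

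\emph{The $p$-group case.} If $G$ is itself a $p$-group, then $\thick_G(k) = \stmod(kG)$, and the task reduces to exhibiting any indecomposable non-projective $kG$-module whose dimension is not $\pm 1$ modulo $|G|$. For $G = C_{p^n}$ cyclic with $p^n \geq 4$, I take $N$ to be the uniserial indecomposable of dimension $2$ (for instance, the quotient of $\Omega k$ by its socle), since $2 \not\equiv \pm 1 \pmod{p^n}$. For a non-cyclic $p$-group of order $\geq 4$, the representation type of $kG$ is infinite and such small indecomposables are easy to write down.

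\emph{The general case.} When $G$ is not a $p$-group, I construct $N$ as an indecomposable summand of a Carlson-type module. Since $p \mid |G|$, the trivial module $k$ has infinite projective dimension, so one may pick a non-zero class $\zeta \in H^n(G, k)$ for some $n > 0$; represent $\zeta$ by a surjective $kG$-module map $\hat\zeta \colon \Omega^n k \twoheadrightarrow k$ and set $L_\zeta := \ker \hat\zeta$. The short exact sequence $0 \to L_\zeta \to \Omega^n k \to k \to 0$ places $L_\zeta$ inside $\thick_G(k)$, and $\dim_k L_\zeta = \dim_k \Omega^n k - 1 \equiv 0 \text{ or } -2 \pmod{|P|}$. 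Since $|P| \geq 4$, neither residue equals $\pm 1$, so $L_\zeta$ itself is not a Heller shift of $k$.

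\emph{Main obstacle.} The delicate point is extracting a suitable indecomposable summand of $L_\zeta$ in the non-$p$-group case: \emph{a priori} $L_\zeta$ might split stably as a direct sum of Heller shifts of $k$ whose individual dimensions recombine to $0$ or $-2 \pmod{|P|}$, so that no single summand has exotic dimension. The real work is therefore to choose $\zeta$ (or iterate the construction with a second cohomology class) so that at least one indecomposable summand of the resulting module has dimension not $\equiv \pm 1 \pmod{|P|}$. This is where I expect the explicit construction using almost split sequences, advertised in the introduction, to do its job, producing a uniform concrete $N$.
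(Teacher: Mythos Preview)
You have correctly located the gap in your own argument, and it is genuine: the congruence $\dim_k L_\zeta \equiv 0$ or $-2 \pmod{|P|}$ constrains only the total dimension, so nothing prevents $L_\zeta$ from splitting stably as a sum of Heller shifts of $k$ whose dimensions happen to add up correctly. Your hope that almost split sequences will close this gap is a misreading of the architecture: the Auslander--Reiten input is already fully spent in Corollary~\ref{strategy}, which reduces the theorem to the purely module-theoretic problem of exhibiting an indecomposable $N \in \thick_G(k)$ with $N \not\cong \Omega^i k$. Solving that problem requires different tools.

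The paper supplies them by splitting on whether $\HHH^*(G,k)$ is periodic. In the non-periodic case one takes $\zeta$ non-nilpotent; then the support variety $V_G(L_\zeta)=V_G(\zeta)$ is a \emph{proper} subvariety of $V_G(k)$, and since varieties pass to direct summands while $V_G(\Omega^i k)=V_G(k)$ for every $i$, no indecomposable summand of $L_\zeta$ can be a Heller shift of $k$---no dimension count is needed at all. In the periodic case $P$ is cyclic or generalized quaternion; here one chooses $\zeta$ in a suitable odd Tate degree and checks, by restricting to $P$ (where $\res^G_P$ is injective on cohomology), that the non-projective part of $L_\zeta$ is a single indecomposable (a uniserial module of dimension $|P|-2$, respectively a copy of $\Rad(\Omega^{-1}k_P)$). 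Your dimension criterion then applies directly to $N=L_\zeta$. Your separate treatment of $p$-groups is subsumed by this dichotomy.
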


\begin{proof}
We divide the proof into two cases. First suppose that $\HHH^*(G,k)$
is periodic, implying that $P$ is either cyclic or quaternion.
By Tate duality, there must exist a nonzero element $\zeta \in \HHHH^n(G,k)$
for some odd integer $n$. Then we have
an exact sequence
$$
\xymatrix{
\CE_{\zeta} \colon \qquad
0 \ar[r] & L_{\zeta} \ar[r] & \Omega^n k \ar[r]^{\zeta} & k \ar[r] & 0
}
$$
where $\zeta$ in the sequence is a cocycle representing the cohomology
element $\zeta$, and $L_{\zeta}$ is the kernel of $\zeta$. In the
case that $P$ is cyclic, $\Omega^n k$ when restricted to $P$ is a
direct sum of a projective module and a uniserial module of
dimension $\vert P \vert -1$. Because the cohomology element $\zeta$
is not zero, its restriction to $P$ is not zero and the restriction
of $L_{\zeta}$ is the direct sum of a projective module and a uniserial
module of dimension $|P|-2$. Consequently, $L_{\zeta}$ must be
indecomposable and moreover, because its dimension is not 1 or -1
modulo $|P|$,  $L_{\zeta}$ is not isomorphic to
$\Omega^n k$ for any $n$. So by corollary \ref{strategy}, the GH fails.

If $P$ is a quaternion group, then again by Tate duality, we can assume
that $n$ is positive and congruent to -1 modulo 4. When we restrict
$L_{\zeta}$ to $P$ we get the direct sum of a projective module and
a copy of $\Rad(\Omega^{-1} k)$ which is easily seen to be indecomposable.
So by the same argument as before, the GH fails.

Now suppose that the cohomology ring $\HHH^*(G,k)$ is not periodic.
This means that the maximal ideal spectrum 
$V_G(k)$ of the cohomology ring $\HHH^*(G,k)$ has Krull dimension
at least two. This time we choose $\zeta \in \HHH^n(G,k)$ with
the property that $n > 0$ and $\zeta$ not nilpotent. Then we construct
the sequence $\CE_{\zeta}$ and the module $L = L_{\zeta}$ exactly as in
the periodic case. The support variety of the module $L$ is
equal to $V_G(L) = V_G(\zeta)$, the collection of maximal ideals
that contain the element $\zeta$ \cite{Connected}. (Note here that we
do not need to assume that the field $k$ is algebraically closed,
though the proof of the statement about the variety of $L_{\zeta}$
requires extending the scalars to the algebraically closed case.)
Because $\zeta$ is not
nilpotent, $V_G(L)$ is a proper subvariety of $V_G(k)$ and the same
statement will hold for any direct summand of $L$. Hence, if $U$ is any
indecomposable direct summand of $L$, we must have that $U \not\cong
\Omega^n(k)$ for all $n$, simply because the support varieties are different.
Consequently by Corollary \ref{strategy}, the GH cannot
hold for $kG$.
\end{proof}

\section{Consequences of the GH}
We now derive some consequences of  the GH.  First of all, we
would like to point out that our main result implies that
the GH for $kG$ depends only on the characteristic of the
field $k$. In other words, if $k_1$ and $k_2$
are two fields of characteristic $p$ which divides the
order of $G$, then the GH holds for $k_1G$ if and only
if it holds for $k_2G$. It is not clear how one would prove
this fact directly.

 The dual generating hypothesis is also a natural problem
to ask. That is, instead of using the
(covariant) Tate cohomology functor $\uHom_{kG}(\Omega^*k , -)$,
we can use the (contravariant)
dual Tate cohomology functor $\uHom_{kG}(-, \Omega^*k)$,
and ask if this contravariant functor detects
trivial maps in $\thick_G(k)$.  The duality functor
$M \mapsto M^*$ sets a tensor triangulated equivalence between
$\thick_G(k)$ and its opposite category
$\thick_G(k)^{\text{opp}}$. In particular, the  GH holds for $kG$
if and only if the dual GH holds for $kG$.
Combining this with our previous results we have:

\begin{thm} \label{equiv}
The following assertions are equivalent:
\begin{enumerate}
\item The Sylow $p$-subgroup of $G$ is either $C_2$ or $C_3$.
\item The functor $\uHom_{kG}(\Omega^*k , -)$ is faithful on $\thick_G(k)$.
\item The functor $\uHom_{kG}( -, \Omega^*k)$ is faithful on  $\thick_G(k)$.
\item $\thick_G(k)$ consists of finite direct sums of suspensions of $k$.
\end{enumerate}
\end{thm}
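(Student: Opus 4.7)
The plan is to reduce the four-way equivalence to proving $(2) \Leftrightarrow (4)$, since the preamble before the theorem already disposes of the other directions: $(1) \Leftrightarrow (2)$ is Theorem \ref{maintheorem} (combining Theorem \ref{GHforgK} above with the earlier result of \cite{CCM4} in the periodic case), while $(2) \Leftrightarrow (3)$ follows from the tensor-triangulated self-duality $M \mapsto M^{*} = \Hom_{k}(M,k)$ of $\thick_G(k)$, which exchanges the covariant functor $\uHom_{kG}(\Omega^{*}k,-)$ with the contravariant functor $\uHom_{kG}(-,\Omega^{*}k)$ under the identifications $k^{*} \cong k$ and $D\Omega \cong \Omega^{-1}D$.

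For $(4) \Rightarrow (2)$, suppose every object of $\thick_G(k)$ is a finite direct sum of suspensions of $k$, and let $\phi \colon M \to N$ be a ghost in $\thick_G(k)$. Writing $M = \bigoplus_{i}\Omega^{a_i}k$ and $N = \bigoplus_{j}\Omega^{b_j}k$, the morphism $\phi$ is encoded by a matrix of maps $\phi_{ji} \colon \Omega^{a_i}k \to \Omega^{b_j}k$, and each entry corresponds to an element of $\uHom_{kG}(\Omega^{a_i}k,\Omega^{b_j}k) \cong \HHHH^{a_i - b_j}(G,k)$. Precomposing $\phi_{*}$ with the canonical inclusions $\Omega^{a_i}k \hookrightarrow M$ and postcomposing with the projections $N \twoheadrightarrow \Omega^{b_j}k$ extracts every $\phi_{ji}$ as an element of Tate cohomology; the ghost hypothesis forces each of these to vanish, so $\phi = 0$.

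For the substantive direction $(2) \Rightarrow (4)$, I would invoke the contrapositive of Corollary \ref{strategy}: if the GH holds for $kG$, then every indecomposable non-projective module in $\thick_G(k)$ is isomorphic to $\Omega^{i}k$ for some integer $i$. Because $\stmod(kG)$ satisfies the Krull--Schmidt property, any object of $\thick_G(k)$ decomposes uniquely, up to isomorphism and reordering, as a finite direct sum of indecomposables; projective summands are zero in $\stmod(kG)$, and each remaining summand is a suspension of $k$, which is precisely (4).

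The only genuinely delicate bookkeeping is in $(2) \Leftrightarrow (3)$, where one must check that $k$-linear duality does interchange the two cohomological functors in a manner that preserves faithfulness. All the substantive work has already been carried out in Theorem \ref{GHforgK} and Corollary \ref{strategy}; the present result is essentially a structural consequence that records what the success (or failure) of the GH says intrinsically about the thick subcategory generated by the trivial module.
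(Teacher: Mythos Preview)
Your proposal is correct and follows essentially the same route as the paper: the paper also cites \cite{CCM4} for $(1)\Rightarrow(2)$, Theorem~\ref{GHforgK} for $(2)\Rightarrow(1)$, Corollary~\ref{strategy} for $(2)\Rightarrow(4)$, calls $(4)\Rightarrow(2)$ trivial, and appeals to the duality paragraph for $(2)\Leftrightarrow(3)$. You simply make explicit what the paper leaves implicit---the Krull--Schmidt step needed to pass from the contrapositive of Corollary~\ref{strategy} to statement~(4), and the matrix argument underlying the ``trivial'' direction $(4)\Rightarrow(2)$.
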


\begin{proof}
$(1) \implies (2)$ is shown in \cite{CCM4}, 
$(2) \implies (4)$ is shown in Corollary
\ref{strategy}, $(4) \implies (2)$ is trivial, 
and $(2) \implies (1)$ is shown in Theorem 
\ref{GHforgK}. Finally the equivalence of statements 
(2) and (3) is shown in the paragraph
preceding this theorem.
\end{proof}

Our final result is motivated by a result of Peter 
Freyd \cite{freydGH} which states that
if the stable homotopy functor on the category of
finite spectra is faithful then it is
also full. We now prove the analogue of this statement
for the stable module category.  This
generalizes Theorem 3.3 of \cite{CCM3} where we
established the same result for $p$-groups.

\begin{thm}
If the GH holds for $kG$, then the Tate cohomology
functor from $\thick_G(k)$ to the category of modules
over $\HHHH^*(G, k)$ is also full.
\end{thm}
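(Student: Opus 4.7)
The plan is to leverage the equivalence $(1)\Leftrightarrow(4)$ from Theorem \ref{equiv}. Assuming the GH holds for $kG$, statement (4) tells us that every object of $\thick_G(k)$ is isomorphic to a finite direct sum of suspensions of the trivial module $k$. Because both $\uHom_{kG}(-,-)$ and graded $\hExt$-module Hom respect finite direct sums in each variable, and the Tate cohomology functor is additive, it suffices to check fullness on hom-sets of the form $\uHom_{kG}(\Omega^n k,\Omega^m k)$ for integers $n,m$.

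Next I would identify both sides of the comparison map explicitly. The domain is $\uHom_{kG}(\Omega^n k,\Omega^m k)\cong \uHom_{kG}(\Omega^{n-m}k,k)=\HHHH^{n-m}(G,k)$. For the codomain, the point is that $\HHHH^*(G,\Omega^n k)$ is a rank-one free graded $\HHHH^*(G,k)$-module, with generator the identity map of $\Omega^n k$, which sits in internal degree $n$ under the convention $\HHHH^i(G,M)=\uHom_{kG}(\Omega^i k,M)$. A graded $\HHHH^*(G,k)$-module homomorphism $\HHHH^*(G,\Omega^n k)\to \HHHH^*(G,\Omega^m k)$ is therefore determined by the image of this generator, which must lie in the degree-$n$ part of $\HHHH^*(G,\Omega^m k)=\HHHH^{*-m}(G,k)$, namely in $\HHHH^{n-m}(G,k)$. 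Thus both sides are canonically in bijection with $\HHHH^{n-m}(G,k)$.

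Finally I would verify that the comparison map induced by the Tate cohomology functor is exactly this identification: a stable map $\alpha\colon \Omega^n k\to \Omega^m k$, viewed as a class in $\HHHH^{n-m}(G,k)$, induces the $\HHHH^*(G,k)$-linear map $\alpha_*$ which sends the identity generator of $\HHHH^*(G,\Omega^n k)$ to $\alpha$ (viewed in degree $n$ of $\HHHH^*(G,\Omega^m k)$). This is immediate from the definition of the module structure, since composition with $\alpha$ is precisely multiplication by the class of $\alpha$ on cup products. Consequently the comparison map is an isomorphism on each hom-set $\uHom_{kG}(\Omega^n k,\Omega^m k)$, and in particular surjective.

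The only real obstacle is bookkeeping: one must be careful that the grading convention used to identify $\HHHH^*(G,\Omega^n k)$ with a shifted copy of $\HHHH^*(G,k)$ is compatible with the $\HHHH^*(G,k)$-module structure used in the statement, and that the isomorphism $\uHom_{kG}(\Omega^n k,\Omega^m k)\cong \HHHH^{n-m}(G,k)$ is natural with respect to the cup product. Once these naturality points are checked, fullness reduces, via Theorem \ref{equiv}(4), to a tautology about maps between free graded modules of rank one.
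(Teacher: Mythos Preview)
Your argument is correct and follows essentially the same route as the paper's proof: use the implication from the GH to statement~(4) of Theorem~\ref{equiv} to reduce to free rank-one graded modules, after which fullness is immediate. One small quibble: you invoke the equivalence $(1)\Leftrightarrow(4)$, but since the hypothesis is the GH itself (which is statement~(2)), the relevant implication is $(2)\Rightarrow(4)$---exactly what the paper cites.
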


\begin{proof}
If the GH holds for $kG$, then from the 
equivalence ($2 \iff 4$) of Theorem \ref{equiv} we know that 
$\thick_G(k)$ is made up of finite  direct sums of suspensions of $k$.
In particular, for each $M$ in $\thick_G(k)$,
$\HHHH^*(G, M)$  is a free $\HHHH^*(G, k)$-module of finite rank.
It follows that the induced map
\[ \uHom_{kG}(M, X) \lrar \Hom_{\HHHH^*(G, k)}
(\HHHH^*(G, M), \HHHH^*(G, X)) \]
is an isomorphism for all $kG$-modules $X$.
Since $M$ was an arbitrary
$kG$-module in $\thick_G(k)$, we have shown
that the functor $\HHHH^*(G, -)$
is full, as desired.
\end{proof}

The class of groups for which the GH has an affirmative answer (namely,
groups whose Sylow $p$-subgroup is $C_2$ or $C_3$) although small, has
some interesting candidates. This includes 
interesting simple groups. For example,
it has the smallest simple group $A_5$ of order $60$ in characteristic $3$.
Another  interesting example in characteristic $3$ is the smallest Janko group
$J_1$  which has  order $175560$. In fact, $J_1$ is
the only sporadic simple group for which the GH holds.

\section{Symmetric algebras}
We end the paper by pointing out that much of the work 
here applies for any finite dimensional
symmetric algebra. Note that if $A$ is such an algebra, we have almost 
split sequences ending in any finitely generated 
indecomposable non-projective $A$-module $N$:
\[ 0 \lrar \Omega^2 N \lrar E \lrar N \lrar 0.\]
Therefore our proofs generalize immediately to give the following result.

\begin{thm}  Let $A$ be a finite dimensional symmetric 
$k$-algebra. Fix a finitely
generated non-projective indecomposable $A$-module $L$. 
Then the following are equivalent.
\begin{enumerate}
\item The functor $\uHom_A(\Omega^* L, -)$ is faithful 
on $\thick(L)$, the thick subcategory
   generated by $L$ in $\stmod(A)$.
\item $\thick(L)$ consists of modules isomorphic to 
finite direct sums of suspensions of $L$.
\end{enumerate}
\end{thm}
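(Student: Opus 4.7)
The plan is to reuse the machinery of Sections 1 and 2 almost verbatim, observing that each ingredient used for $kG$ is available over a finite dimensional symmetric $k$-algebra $A$. Since $A$ is symmetric it is self-injective, so $\stmod(A)$ is a Krull-Schmidt triangulated category; the Nakayama functor is the identity, hence the Auslander-Reiten translate is $\tau = \Omega^2$, and every finitely generated indecomposable non-projective $A$-module $N$ sits at the end of an almost split sequence
\[ 0 \lrar \Omega^2 N \lrar E \lrar N \lrar 0, \]
as recorded in the statement. With this in place, the proof of Theorem \ref{thm:newghosts} goes through word for word: for any indecomposable non-projective $A$-modules $M$ and $N$ with $N \not\cong \Omega^i M$ for every $i$, the boundary map $\phi \colon N \lrar \Omega N$ of the triangle attached to this sequence is non-zero in $\stmod(A)$ and annihilates $\hExt_A^*(M, -)$.

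For (2) $\Rightarrow$ (1) I would argue directly. If every $M \in \thick(L)$ has the form $\bigoplus_k \Omega^{i_k} L$, then any $f \colon M \lrar X$ is determined by its components $f \circ \iota_k \in \uHom_A(\Omega^{i_k} L, X)$, where $\iota_k$ is the inclusion of the $k$th summand; each such component is the image of $\iota_k \in \uHom_A(\Omega^{i_k}L, M)$ under $\uHom_A(\Omega^{i_k}L, f)$. Hence $\uHom_A(\Omega^* L, f) = 0$ forces $f = 0$, which gives faithfulness.

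For (1) $\Rightarrow$ (2) I would proceed by contrapositive. If (2) fails, then some object of $\thick(L)$ has a Krull-Schmidt summand that is not isomorphic to any $\Omega^i L$. Since $\thick(L)$ is closed under direct summands, we obtain an indecomposable non-projective $N \in \thick(L)$ with $N \not\cong \Omega^i L$ for all $i \in \bZ$. Applying the symmetric-algebra version of Theorem \ref{thm:newghosts} with the roles of ``$M$'' and ``$N$'' played by $L$ and $N$ produces a non-trivial map $\phi \colon N \lrar \Omega N$ in $\thick(L)$ whose induced map on $\hExt_A^*(L, -) \cong \uHom_A(\Omega^* L, -)$ is zero, contradicting (1). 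This is exactly the analogue of Corollary \ref{strategy}.

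The only thing that needs verification, and therefore the main obstacle if there is one, is that the almost split sequence theory used in Theorem \ref{thm:newghosts}, namely existence of the sequence terminating in $N$ and the identification of its first term with $\Omega^2 N$, holds over an arbitrary finite dimensional symmetric $k$-algebra. Both facts are standard: existence uses only that $A$ is an Artin algebra, and the identification $\tau N \cong \Omega^2 N$ is a direct consequence of the vanishing of the Nakayama twist for a symmetric algebra. Once this is noted, the generalization reduces to recognizing that nothing in the earlier proofs was specific to group algebras.
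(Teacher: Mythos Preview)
Your proposal is correct and follows essentially the same route as the paper: the authors also prove $(2)\Rightarrow(1)$ as immediate, and for $(1)\Rightarrow(2)$ argue by contrapositive that an indecomposable $N\in\thick(L)$ with $N\not\cong\Omega^iL$ yields, via the almost split sequence ending in $N$, a non-trivial map $N\to\Omega N$ invisible to $\uHom_A(\Omega^*L,-)$. You have simply filled in the details the paper leaves to the reader.
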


We sketch a proof and leave the details to the reader.

\begin{proof}
Clearly (2) implies (1). For the other direction, 
suppose that (2) fails. Then we have a
an indecomposable non-projective object $N$ in 
$\thick(L)$  such that $N \ncong \Omega^i L$ for any $i$.
Then the map $N \lrar \Omega N$ corresponding to 
the almost split sequence ending in $N$ is shown 
(exactly as before) to be a non-trivial map that 
is invisible to the functor $\uHom_A(\Omega^*L, -)$. 
This shows that (1) fails. So we are done.
\end{proof}


\begin{thebibliography}{BCCM07}

\bibitem{ARS} M. Auslander, I. Reiten, and S. Smal\o, 
\newblock Representation Theory of Artin Algebras, 
\newblock Cambridge Studies in Advanced Mathematics {36}, 1995.


\bibitem{CCM3}
David Benson, Sunil~K. Chebolu, J.~Daniel Christensen, and J\'{a}n
  Min\'{a}\v{c}.
\newblock The generating hypothesis for the stable module category of a
  $p$-group.
\newblock {\em Journal of Algebra}, 310(1):428--433, 2007.

\bibitem{carlson-modulesandgroupalgebras}
Jon~F. Carlson.
\newblock {\em Modules and group algebras}.
\newblock Lectures in Mathematics ETH Z\"urich. Birkh\"auser Verlag, Basel,
  1996.
\newblock Notes by Ruedi Suter.


\bibitem{Connected} J. F. Carlson, {\it The variety of an
indecomposable module is
connected}, Invent. Math., {\bf 77}(1984), 291\--299.


\bibitem{CCM2}
Sunil~K. Chebolu, J.~Daniel Christensen, and J{\'a}n Min{\'a}{\v{c}}.
\newblock Groups which do not admit ghosts.
\newblock {\em Proc. Amer. Math. Soc.}, 136(4):1171--1179, 2008.

\bibitem{CCM4}
Sunil~K. Chebolu, J.~Daniel Christensen, and J\'{a}n Min\'{a}\v{c}.
\newblock Freyd's generating hypothesis for groups with periodic cohomology.
\newblock 2007.
\newblock preprint, arXiv:0710.3356.

\bibitem{CCM}
Sunil~K. Chebolu, J.~Daniel Christensen, and J\'{a}n Min\'{a}\v{c}.
\newblock Ghosts in modular representation theory.
\newblock {\em Advances in Mathematics}, 217:2782--2799, 2008.


\bibitem{freydGH}
Peter Freyd.
\newblock Stable homotopy.
\newblock In {\em Proc. Conf. Categorical Algebra (La Jolla, Calif., 1965)},
  pages 121--172. Springer, New York, 1966.


\bibitem{GH-D(R)}
Mark Hovey, Keir Lockridge, and Gena Puninski.
\newblock The generating hypothesis in the derived category of a ring.
\newblock {\em Mathematische Zeitschrift}, 256(4):789--800, 2007.


\bibitem{keir}
Keir Lockridge.
\newblock The generating hypothesis in the derived category of {$R$}-modules.
\newblock {\em Journal of Pure and Applied Algebra}, 208(2):485--495, 2007.


\end{thebibliography}
\end{document}